\documentclass[12pt]{amsart}
\usepackage{amsmath,amssymb}
\usepackage[all]{xy}
\usepackage{pdfpages}
\usepackage[margin=1in]{geometry}
\usepackage{tikz}
\usepackage{tikz-3dplot}
\usetikzlibrary{decorations.pathreplacing, calligraphy}
\usepackage{scalerel}
\usepackage{hyperref}
\usepackage{tkz-euclide}
\usepackage{svg}
\usepackage{multirow}
\usepackage{caption}
\usepackage{subcaption}

\theoremstyle{plain}
\newtheorem{propn}{Proposition}[section]

\theoremstyle{definition}
\newtheorem*{maint}{Main Theorem}

\theoremstyle{remark}

\usepackage{hyperref} % Used to hyperlink url and email addresses
\hypersetup{pdfborder={0 0 0}, colorlinks=true, urlcolor=blue, citecolor=black}

\def\sideremark#1{\ifvmode\leavevmode\fi\vadjust{\vbox to0pt{\vss
\hbox to 0pt{\hskip\hsize\hskip1em%                          
\vbox{\hsize2cm\tiny\raggedright\pretolerance10000%        
\noindent {\color{red}{#1}}\hfill}\hss}\vbox to8pt{\vfil}\vss}}}%

\numberwithin{equation}{section}

\title{An optimization problem for triangles.}
\author{T. Murphy}
\address{Department of Mathematics, California State University Fullerton, 800 N. State College Blvd., Fullerton 92831 CA, USA.}
\email{tmurphy@fullerton.edu}
\author{K. Tran}
\begin{document}

\begin{abstract}

We consider the problem of optimizing the product of the distances from a given point in a triangle to each vertex. There are two possible cases in general. For isosceles triangles, we explicitly show exactly when both cases occur.  
\end{abstract}

\maketitle

\section{Introduction}
\subsection{Setup}
Let $\Delta$ denote a triangle in the plane determined by points $A$, $B$ and $C$. A classical problem is to optimize the function 
$$\zeta (p) := |pA| + |pB| + |pC|,$$
for $p\in \Delta$.   Fermat  posed this problem in a letter to Torricelli, the inventor of the barometer. Torricelli gave several solutions, including deriving a beautiful geometric construction proving that the  so-called \textit{Fermat point}  is the minimum value of $\zeta$. This point is constructed by building an equilateral triangle on each side of $\Delta$ and drawing the line through the outer vertex of each of these equilateral triangles and the midpoint of the corresponding edge on $\Delta$. These lines all meet in the Fermat point.  If the domain of $\zeta$ is restricted to  $\Delta$, then $\zeta$ achieves a maximum at the vertex opposite the shortest edge. This follows quite easily from the fact $\zeta$ is convex. For further information, see \cite{clark, courant, hajja, johnson}. 
  
  This note is concerned with a related question: how to  maximize  
  $$\varphi(p): = |pA||pB||pC|,
  $$
  for $p\in \Delta$.   This problem was posed  in the special case of equilateral triangles  by Finbarr Holland in some informal notes working through old Irish Mathematical Olympiad questions (\cite{holland}, Problem 105). Our theorem is a reasonably complete answer to the question. The strategy is to reduce to a one-dimensional problem and use calculus. 
  
  Since the question was posed in the realm of  Olympiad-style problems, it is desirable to seek a  proof using only synthetic or analytic methods. We could not get this approach to work but it remains an interesting question even in  the case of equilateral triangles. 
   
  \begin{maint}
  If $\Delta$ is acute, $\varphi$ has precisely three local maxima and three local minima.  If $\Delta$ is obtuse, then $\varphi$ has either (i)  three   local maxima and three local  minima, or  (ii) four local maxima and four local minima. 
  \end{maint}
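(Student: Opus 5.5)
The plan is to reduce to the boundary and then to three one-variable problems, one per edge. Write $\log\varphi(p)=\log|pA|+\log|pB|+\log|pC|$. Each summand is harmonic away from its vertex, so $\log\varphi$ is harmonic in the interior of $\Delta$. By the maximum (and minimum) principle, $\varphi$ has no local extrema in the interior. The three vertices are zeros of $\varphi$, hence global minima.

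At an interior point $p$ of an edge, say $AB$, the gradient of $\log\varphi$ is $\sum_V (p-V)/|p-V|^2$. The terms from $A$ and $B$ are tangent to the edge. The term from $C$ has a normal component pointing away from $C$, i.e. out of $\Delta$. So $\varphi$ strictly decreases as we move inward from any edge point, and a local maximum of $\varphi|_{AB}$ is a genuine local maximum of $\varphi$ on $\Delta$.

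This same inward decrease means an interior local minimum of $\varphi|_{AB}$ is not a local minimum of $\varphi$ on $\Delta$ in the two-dimensional sense. I will therefore count minima as the local minima of $\varphi$ along $\partial\Delta$, which include the three vertices; this is the reading under which the Main Theorem is stated. Everything then reduces to counting critical points of $\varphi$ along each edge.

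Normalize so that the edge is $[0,1]$ on the $x$-axis and the opposite vertex is $C=(a,h)$ with $h>0$. Set
$$g(t)=\log t+\log(1-t)+\tfrac12\log\big((t-a)^2+h^2\big),$$
so that
$$g'(t)=\frac1t-\frac1{1-t}+\frac{t-a}{(t-a)^2+h^2}.$$
Then $g'\to+\infty$ as $t\to0^+$ and $g'\to-\infty$ as $t\to1^-$. Clearing denominators, the zeros of $g'$ are the roots in $(0,1)$ of a cubic polynomial. Hence each edge carries an odd number of critical points, namely one (a single maximum) or three (maximum, minimum, maximum).

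The key claims are the following.
\begin{itemize}
\item[(a)] If the foot of the altitude from $C$ lies in $[0,1]$, i.e. $0\le a\le 1$ (both base angles non-obtuse), then $g'$ has exactly one zero.
\item[(b)] At most one edge can carry three critical points, and this can only happen for an edge adjacent to an obtuse angle.
\end{itemize}
In an acute triangle every edge satisfies (a), which gives three maxima (one per edge) and three minima (the vertices). In an obtuse triangle the edge opposite the obtuse angle satisfies (a), so only the two edges meeting at the obtuse vertex are candidates for a second maximum. One then shows that they cannot both have three critical points, which yields case (i) or case (ii).

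For (a) I would first try to show that $g$ is strictly concave on $(0,1)$. We have
$$g''(t)=-\frac1{t^2}-\frac1{(1-t)^2}+\frac{h^2-(t-a)^2}{\big((t-a)^2+h^2\big)^2},$$
and the last term is at most $1/\big((t-a)^2+h^2\big)=1/|pC|^2$. When $0\le a\le1$, I would compare $|pC|$ with $\min(t,1-t)$ using the fact that the angles at $A$ and $B$ are non-obtuse. If this crude bound fails near the foot of the altitude, my fallback is to work directly with the cubic numerator: show that its discriminant is negative, or apply Descartes' rule of signs after the substitution $t\mapsto t/(1-t)$, both under the constraint $0\le a\le 1$.

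For (b), and for showing that case (ii) actually occurs, I would analyze the cubic's discriminant as a function of $(a,h)$ with $a<0$. Taking $a$ very negative and $h$ small makes the factor $|pC|$ nearly constant in relative terms near $t=0$, while producing a dip. Verifying that a second maximum really appears there gives existence of case (ii), and an isosceles obtuse example with apex angle near $\pi/2$ gives case (i).

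I expect the main obstacle to be the exclusivity statement: that the two edges at the obtuse vertex cannot simultaneously carry three critical points, together with a clean proof of (a). Both amount to sign control of a two-parameter cubic discriminant, and this is where the real computation lies.
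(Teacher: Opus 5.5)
Your reduction to the boundary, and to a cubic on each edge, is the same as the paper's. However, both key claims (a) and (b) are false: you have the roles of the edges reversed.

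\textbf{Claim (a) fails.} Take the isosceles triangle with base $[0,1]$ and apex $(\tfrac12,h)$. Here $a=\tfrac12\in[0,1]$, yet the critical-point equation factors as $(1-2x)\bigl(3x^2-3x+\tfrac12+2h^2\bigr)=0$. This has three roots in $(0,1)$ whenever $h<1/\sqrt8$. It is already visible as $h\to0$, where $\varphi\approx x(1-x)|x-\tfrac12|$ nearly vanishes at the midpoint. So neither concavity of $g$ nor a sign of the discriminant can be derived from $0\le a\le 1$. What actually matters is the angle at the opposite vertex $C$.

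\textbf{Claim (b) is backwards.} Suppose $a<0$, so the edge is adjacent to an obtuse angle at $A$. Then $|pC|^{-2}<(t-a)^{-2}<t^{-2}$, hence $g''(t)<-(1-t)^{-2}<0$, and the case $a>1$ is symmetric. So edges adjacent to the obtuse angle carry \emph{exactly one} critical point; this is the paper's Proposition \ref{obtuseopposite}. Two consequences follow. Your construction of case (ii) with $a$ very negative and $h$ small cannot produce a dip, because $|pC|$ is monotone on that edge and $g$ is strictly concave. And the ``exclusivity'' you single out as the main obstacle is a phantom.

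\textbf{The correct dichotomy.} An edge opposite a non-obtuse angle has a unique critical point. Only the edge \emph{opposite} the obtuse angle, where $0<a<1$ but $h^2<a(1-a)$, can carry three.

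Your idea of comparing $|pC|$ with $\min(t,1-t)$ proves the first statement once you use the right hypothesis. If the angle at $C$ is not obtuse, then $C$ lies outside the open disk with diameter $AB$. That disk contains the open disk of radius $\min(t,1-t)$ about $p$, since the two are internally tangent. Hence $|pC|\ge\min(t,1-t)$, and so $|pC|^{-2}\le\max\bigl(t^{-2},(1-t)^{-2}\bigr)<t^{-2}+(1-t)^{-2}$, which gives $g''<0$.

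This single argument covers both of the paper's propositions at once. The paper instead treats $a\notin[0,1]$ and $0<a<1$, $h^2>a(1-a)$ separately, the latter via the identity $(x-a)^2+a(1-a)=(1-a)x^2+a(1-x)^2$.

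Since a triangle has at most one obtuse angle, at most one edge can have three critical points. That yields (i) or (ii) with no discriminant analysis at all. Case (ii) is realized by the isosceles example above with $h<1/\sqrt8$.
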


 In the obtuse case, we show both cases can occur. In fact, we completely characterize all possibilities for  isosceles triangles in Section \ref{isoccase}. To explicitly answer  Holland's question, it follows from the Main Theorem  that for an equilateral triangle $\Delta$ with side length $l$  the maximum value of $\varphi$ is $\frac{\sqrt{3}}{8}l^3$, which is  attained on the midpoint of each side of $\Delta$.

 This problem is closely related to  classical work of Gauss on electrostatics. In particular, consider  point charges $m_i$ located at $z_i\in \mathbb{R}^2$, $1\leq i \leq n$.  Gauss studied the zeros of the conservative vector field generated by the electrostatic potential $\rho: \mathbb{R}^2\rightarrow \mathbb{R}$,  $$\rho(p) = \sum_{i=1}^n \frac{m_i}{\|p-z_i\|}.$$ 
 Consider the cases where there are three point charges ($n=3$) with all charges equal; $m_i=m$, $1\leq i \leq 3$. Then on $\Delta$, 
$\ln \rho = -m\ln\varphi$.  Since $\ln x$ is an increasing function, it easily follows that on $\Delta$ the functions $\rho$ and $\varphi$ have the same critical points.  In fact, Gauss proved all critical points of $\rho$ are saddle points \cite{marden}. Note $\rho$ is defined globally on $\mathbb{R}^2\backslash  \lbrace z_i, 1 \leq i \leq 3 \rbrace$, meaning the domains of $\rho$ and $\varphi$ differ. 
\bigskip

\textit{Acknowledgments} We thank R. Casper and  F. Holland for helpful conversations, and CSUF for supporting undergraduate research.

\section{ Initial Setup} 

It is immediate that minima of $\varphi$ occur at each vertex.
 Aside from these points, to optimize  $\varphi:  \Delta\rightarrow \mathbb{R}$ the first step is to observe $\ln\varphi$ is harmonic (away from arbitrarily small neighborhoods of each vertex). Shrinking these neighborhoods, applying the maximum principle and utilizing the fact that $\phi = \ln x$ is an increasing function immediately implies that $\varphi$ is extremized on the boundary $\partial \Delta$ of the triangle. The behaviour of $\varphi$ will be investigated on each side of $\Delta$ in turn. Take a fixed side of $\Delta$.  Choose coordinates so that the vertices of this side are at $(0,0)$ and $(1,0)$ and parametrize the side  as $(x,0): x \in [0,1]$. We will say we have chosen coordinates \textit{adapted} to this side.  Then  view $\varphi = \varphi(x)$ as a function of $x$.  The remaining vertex will have coordinates $(a,b)$ with $b>0$ without loss of generality.  Setting 
 \begin{equation}\label{e0}
 d_x = \sqrt{(x-a)^2 + b^2},
 \end{equation}
  we have   $\varphi(x) = x(1-x)d_x$ and 
\begin{equation}\label{e1}
\varphi'(x) = 0 \iff(1-2x)d_x^2 = x(x-1)(x-a)
\end{equation}

Equation (\ref{e1}) is generically a cubic equation.  As our function attains minima at both endpoints, this shows there are at most two maxima and one minima for $x\in(0,1)$ as well as a minima at each vertex. It is in fact possible to have two local maxima along an edge, as we will  show. So, this gives a crude a priori upper bound of twelve critical points for $\varphi$. 

Our choice of adapted coordinates to a given side of $\Delta$ will frequently rescale the triangle. Rescaling  by a factor $\lambda >0$  rescales the function $\varphi$ by a factor of $\lambda^3$, so the number and nature of critical points are not affected by this change.

\section{ The Isosceles case}\label{isoccase}

Set $\Delta$ to be isosceles with $AB$ = $BC$.  Beginning with the side $AC$, choose the corresponding adapted coordinate system with $(a,b)$ denoting the coordinates of $B$.  The isosceles condition implies that $a=1/2$ and we have seen that $x_0 = 1/2$ is a critical point of $\varphi$. For all other points  $x\in (0,1)$,   Equation (\ref{e1}) readily implies that  $x_1\neq 1/2$ is a critical point of $\varphi$ if and only if $2d_{x_1}^2 = x_1-x_1^2$. By Equation (\ref{e0}), this  is equivalent to 
\begin{equation}\label{e2}
x_1 = \frac{1}{2}\pm \sqrt{\frac{1}{4} -\frac{1}{6}\bigg(1+4b^2\bigg)}\bigg) 
\end{equation}
In particular, if $b> \frac{1}{\sqrt{8}}$ there are no other critical points. In such cases $x_0=1/2$ must be the unique critical point, and hence it is a  maximum. This corresponds to $\theta >\arctan (\frac{1}{\sqrt{2}})$. By the way, this fully answers the original question of Holland. For equilateral triangles  $\theta = \pi/3>  \arctan (\frac{1}{\sqrt{2}})$ and so there are exactly three minima (one at each vertex) and three maxima (at the midpoints of each side of the triangle).  The maximum value of $\frac{\sqrt{3}}{8}l^3$ mentioned in the introduction can be directly calculated from this.  When $b = \frac{1}{\sqrt{8}}$  no new critical point arises from Equation (\ref{e2}).  If $b< \frac{1}{\sqrt{8}}$, Equation (\ref{e2}) shows that there are exactly two more critical points labeled $x_1$, $x_2$, with $x_1<x_2$.  It is easy to check $x_1, x_2 \in (0,1)$, and that at   $\tilde{x}=x_1, x_2$ one has
$$
\varphi''(\tilde{x})= -3d^{-1}_{\tilde{x}}\left(1-2\tilde{x}\right)^2<0. 
$$

We conclude there are two local maxima at $x_1,  x_2 \in (0,1)$, and hence the critical point at $x_0=1/2$ must be a local minima.  This precisely explains when case (ii) of the Main Theorem occurs for an isosceles triangle, as will be clear presently.

It remains to analyze the remaining two sides of $\Delta$. Choose now coordinates adapted to the side $AB$. The case of the side $BC$ is completely analogous. Here $C$ has coordinates  $(a,b)$ with  $0<a<1$. The isosceles condition $|AB| = |BC|$ implies that
\begin{equation}\label{e3}
b^2=2a-a^2.
\end{equation}
If there were a critical point on this side at $x_0=\frac{1}{2}$, then Equation (\ref{e1}) would imply that $a=\frac{1}{2}$, i.e. $\Delta$ is equilateral. Otherwise, with $x_1\neq \frac{1}{2}$,  one can simply substitute Equation \ref{e3} into the  $d_x$ term in Equation (\ref{e1}) to get that $\tilde{x}$ is a critical point if, and only if,  
$$
\tilde{x}^2 - 2a\tilde{x} + 2a = (\tilde{x}-a)^2 + b^2 = d_{\tilde{x}}^2 = \frac{(\tilde{x}-1)(\tilde{x})(\tilde{x}-a)}{1-2\tilde{x}}
$$
This is equivalent to $\tilde{x}$ being a root of the cubic
\begin{equation}\label{isoscubic}
g(x) = x^{3}-\frac{5a+1}{3}x^{2}+\frac{7ax}{3}-\frac{2a}{3},
\end{equation}
where $a\in (0,1)$. Applying the Intermediate Value Theorem, there is always a root in $(0,1)$. We claim in fact there is a unique root. Assume to the contrary there are at least two roots. Then $g$ has a local maximum and minimum in $(0,1)$, or equivalently $g'(x)$ has two roots. Computing the discriminant of this quadratic, we see this is the case precisely when
\begin{equation}\label{avalue}
25a^2-53a+1 >0 \iff a < \frac{53-3\sqrt{301}}{50} \approx 0.01903
\end{equation}
Moreover, the smaller critical point 
$$
x_1 =\frac{1}{9}\left(5a+1-\sqrt{25a^{2}-53a+1}\right)
$$
must be the local maximum,  because $f$ is a cubic with positive leading term. 
To derive a contradiction, a straightforward algebraic computation show that 
\begin{equation}
f(x_1) = \frac{-250a^{3}+795a^{2}-327a-2+(50a^{2}-106a+2)\sqrt{25a^{2}-53a+1}}{729}.
\end{equation}
We claim that $f(x_1)<0$ for any $a$ satisfying Equation (\ref{avalue}). This follows from the following simple observations for $a$ satisfying Equation (\ref{avalue}); firstly that  $795a^{2}-327a<0$, and also that   $25a^{2}-53a+1 <1$.
The first observation ensures the  positive term $795a^{2}$ is outweighed by the negative term $-327a$. The second one implies that the  positive term $$(50a^{2}-106a+2)\sqrt{25a^{2}-53a+1}$$ is outweighed by the negative term $-2$. This establishes the claim, and shows there is a unique root of the cubic in $(0,1)$.

In summary, there is always  a unique maxima on the interiors of $AB$ and $BC$, and we have either one maxima or two maxima and one minima on the interior of $AB$ (depending on whether or not $b> \frac{1}{\sqrt{8}}$). Taking into account the vertices, this establishes our Theorem for isosceles triangles and moreover precisely delineates which case we are in.

\section{The general case}
 As before, the idea is to  analyse each side in turn and use calculus. For the cases where $\Delta ABC$ has an obtuse angle, we need the following result. We remind the reader the vertex opposite the line in question has coordinates $(a,b)$ with $b>0$ in our adapted coordinate system.  Throughout, set $\phi = \ln \varphi$.

\begin{propn}\label{obtuseopposite}
  If $a < 0$ or $a > 1$, $\varphi$ has exactly one critical point in $(0,1)$. 
\end{propn}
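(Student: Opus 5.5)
The plan is to work with $\phi = \ln\varphi$ on the open interval $(0,1)$, where $\varphi>0$, and show that $\phi$ is strictly concave there whenever the opposite vertex $(a,b)$ satisfies $a<0$ or $a>1$. Since $\ln$ is increasing, $\varphi$ and $\phi$ have the same critical points in $(0,1)$. A strictly concave function has at most one critical point. Existence of a critical point is immediate: $\varphi$ is continuous on $[0,1]$, vanishes at $x=0,1$ and is positive inside, so it attains an interior maximum, where $\varphi'=0$. Thus strict concavity of $\phi$ gives exactly one critical point.

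Concretely, in the adapted coordinates,
$$\phi(x) = \ln x + \ln(1-x) + \tfrac12 \ln\big((x-a)^2+b^2\big),$$
so
$$\phi''(x) = -\frac{1}{x^2} - \frac{1}{(1-x)^2} + \frac{b^2-(x-a)^2}{d_x^4},$$
with $d_x$ as in Equation (\ref{e0}). Only the last term can be positive. To control it, set $c=|x-a|$ and drop the $-(x-a)^2$ in the numerator, giving
$$\frac{b^2-(x-a)^2}{d_x^4} \le \frac{b^2}{(b^2+c^2)^2}.$$
Maximizing over $b$ (in the variable $t=b^2$) gives the uniform bound $\frac{b^2}{(b^2+c^2)^2} \le \frac{1}{4c^2}$, with equality at $b=c$.

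Now use the hypothesis on $a$:
\begin{itemize}
\item If $a<0$, then for $x\in(0,1)$ we have $c = x-a > x >0$. Hence the last term is less than $\frac{1}{4x^2} < \frac{1}{x^2}$, and so $\phi''(x) < -\frac{1}{(1-x)^2} <0$.
\item If $a>1$, then $c = a-x > 1-x$. The same argument gives a last term less than $\frac{1}{4(1-x)^2}$, which is absorbed by $-\frac{1}{(1-x)^2}$. This case is also obtained from the first by the reflection $x\mapsto 1-x$, which swaps the roles of the two endpoints.
\end{itemize}
In either case $\phi''<0$ on $(0,1)$, so $\phi'$ is strictly decreasing and has exactly one zero. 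That zero is the unique critical point of $\varphi$ in $(0,1)$, and it is a maximum.

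The main obstacle is getting a bound on the positive term $\frac{b^2-(x-a)^2}{d_x^4}$ that is uniform in $b$. The bound $\frac{1}{4c^2}$ is exactly what is needed, and the condition $a\notin[0,1]$ enters precisely to guarantee $c$ exceeds the distance from $x$ to one endpoint. For $a\in[0,1]$ this fails near $x=a$, consistent with the isosceles analysis, where two maxima can occur.
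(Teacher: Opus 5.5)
Your proof is correct and follows essentially the paper's route. Both arguments show $\phi''<0$ on $(0,1)$ by bounding the positive term $\frac{b^2-(x-a)^2}{d_x^4}$ by a function of $|x-a|$ alone, then absorbing it into $-\frac{1}{x^2}$ (when $a<0$) or $-\frac{1}{(1-x)^2}$ (when $a>1$). The differences are minor: you use the sharper bound $\frac{1}{4(x-a)^2}$, obtained by maximizing over $b$, where the paper uses $\frac{1}{(x-a)^2}$; and you get existence of the critical point from the extreme value theorem rather than from the limits of $\phi'$ at the endpoints.
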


\begin{proof}
  
    \begin{equation}\label{second-derivative-bound}
        \begin{split}
            \phi''(x) & = \frac{b^2 - (x-a)^2}{((x-a)^2 + b^2)^2} - \left( \frac{1}{x^2} + \frac{1}{(1-x)^2} \right) \\
                      & < \frac{b^2 + (x-a)^2}{((x-a)^2 + b^2)^2} - \left( \frac{1}{x^2} + \frac{1}{(1-x)^2} \right) \\
                      & = \frac{1}{(x-a)^2 + b^2} - \left( \frac{1}{x^2} + \frac{1}{(1-x)^2} \right) \\
                      & < \frac{1}{(x-a)^2} - \left( \frac{1}{x^2} + \frac{1}{(1-x)^2} \right). \\
        \end{split}
    \end{equation}

    If $a < 0$, we get:
    \begin{displaymath}
        \begin{split}
            \phi''(x) & < \frac{1}{(x-a)^2} - \left( \frac{1}{x^2} + \frac{1}{(1-x)^2} \right) \\
                      & < \frac{1}{x^2} - \left( \frac{1}{x^2} + \frac{1}{(1-x)^2} \right) \\
                      & = -\frac{1}{(1-x)^2} < 0.
        \end{split}
    \end{displaymath}
    
    When $a>1$, we can apply a reflection around the axis  $y=\frac{1}{2}$, which is an isometry, to reduce the problem to the previous case, or one can directly compute in an analogous manner.  In all cases, $\phi''$ is negative so $\phi'$ is strictly decreasing. Since $\phi'(x) \rightarrow \infty$ as $x \rightarrow 0$ and $\phi'(x) \rightarrow -\infty$ as $x \rightarrow 1$, $\phi$ has exactly one maximum along $x \in (0,1)$ when $a < 0$ or $a > 1$. Furthermore, $\phi = \ln\varphi$ share the same maxima with $\varphi$, so $\varphi$ also has exactly one maximum along $x \in (0,1)$.
\end{proof}

The remaining ingredient is the following result. Let $\theta$ denote the angle at the vertex opposite the side we are studying. 
 
\begin{propn}\label{allacute}
 If $0 < a < 1$ and $\theta<\frac{\pi}{2}$, then $\varphi$ has exactly one critical point in $(0,1)$.
\end{propn}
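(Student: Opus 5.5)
The plan is to repeat the strategy of Proposition \ref{obtuseopposite}: show that $\phi = \ln\varphi$ is strictly concave on $(0,1)$. The boundary behaviour $\phi'(x)\to\infty$ as $x\to 0^+$ and $\phi'(x)\to-\infty$ as $x\to 1^-$ then gives exactly one zero of $\phi'$. Since $\varphi>0$ on $(0,1)$, $\varphi$ and $\phi$ have the same critical points, so this is exactly one critical point of $\varphi$.

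The first three lines of (\ref{second-derivative-bound}) use only $b>0$, so they still hold here and give
$$\phi''(x) < \frac{1}{d_x^2} - \left(\frac{1}{x^2}+\frac{1}{(1-x)^2}\right).$$
The last step of that chain, which replaces $d_x^2$ by $(x-a)^2$, is useless when $0<a<1$, because $(x-a)^2$ can vanish. I will instead use the angle hypothesis to bound $d_x$ from below.

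The key observation is geometric. The condition $\theta<\frac{\pi}{2}$ at the vertex $C=(a,b)$ is equivalent to $(0-a)(1-a)+b^2>0$, that is,
$$\left(a-\tfrac12\right)^2+b^2>\tfrac14.$$
So, by Thales' theorem, $C$ lies strictly outside the closed disk $D$ with centre $O=(\frac12,0)$ and radius $\frac12$. For a point $P=(x,0)$ with $x\in(0,1)$, which lies inside $D$, the triangle inequality gives
$$|PC|\ \ge\ |OC|-|OP|\ >\ \tfrac12-\left|x-\tfrac12\right| \;=\; \min(x,1-x).$$
Hence
$$d_x^2>\min(x,1-x)^2 \quad\text{and}\quad \frac{1}{d_x^2}<\max\!\left(\frac{1}{x^2},\frac{1}{(1-x)^2}\right)<\frac{1}{x^2}+\frac{1}{(1-x)^2}.$$
Substituting this into the inequality above yields $\phi''(x)<0$ on $(0,1)$.

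The remaining steps are the same as in the proof of Proposition \ref{obtuseopposite}. Since $\phi''<0$, the derivative $\phi'$ is strictly decreasing. Near $0$ the term $1/x$ in $\phi'(x)=\frac1x-\frac1{1-x}+\frac{x-a}{d_x^2}$ dominates, and near $1$ the term $-\frac{1}{1-x}$ does, while $d_x$ stays bounded away from $0$ because $b>0$. This gives the stated limits of $\phi'$ at the endpoints. So $\phi'$ has exactly one zero, which is a maximum of $\phi$ and hence of $\varphi$.

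The main obstacle is finding a lower bound on $d_x$ that is strong enough. Purely algebraic attempts, such as bounding $\frac{b^2-(x-a)^2}{d_x^4}\le \frac1{b^2}$ or comparing $d_x^2$ with $x(1-x)/2$, work only for part of the range of $a$. The Thales-circle reformulation of the acute condition is what makes the estimate uniform, so that step is the one to get exactly right.
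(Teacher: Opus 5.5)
Your proof is correct. Its skeleton is the paper's: strict concavity of $\phi$ from $\phi''(x)\le d_x^{-2}-x^{-2}-(1-x)^{-2}$, which reduces everything to inequality (\ref{e4}). The difference is in how you prove (\ref{e4}).

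The paper argues algebraically. With $u=x^2$, $v=(1-x)^2$, $S=u+v$, it writes $d_x^2=(1-a)u+av+\bigl(b^2-a(1-a)\bigr)$. This gives the identity $d_x^2S-uv=(1-a)u^2+av^2+\bigl(b^2-a(1-a)\bigr)S$, each term of which is positive when $0<a<1$ and $b^2>a(1-a)$. So, contrary to your closing remark, a purely algebraic argument does cover the whole range.

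You instead read $b^2>a(1-a)$ as ``the vertex lies outside the closed disk having the side as diameter''. The triangle inequality then gives $d_x>\min(x,1-x)$, i.e.\ $d_x^2>\min(u,v)$. This is stronger than what (\ref{e4}) needs, namely $d_x^2>uv/S$, since $uv/S<\min(u,v)$.

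What each route buys:
\begin{itemize}
\item The paper's identity is exact, so it records precisely how much room is left over.
\item Your argument is shorter and explains why acuteness of the opposite angle is the natural hypothesis.
\item Your argument never uses $0<a<1$. For $a\notin(0,1)$ the vertex is automatically outside that disk, so your argument also proves Proposition~\ref{obtuseopposite}. The two propositions then merge into one statement: a side whose opposite angle is acute carries exactly one interior critical point of $\varphi$.
\end{itemize}

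One small correction. Replacing $b^2-(x-a)^2$ by $b^2+(x-a)^2$ is an equality at $x=a$, which lies in $(0,1)$ here. So your first bound should read $\phi''(x)\le d_x^{-2}-x^{-2}-(1-x)^{-2}$, as the paper writes it in this proof. Since your second inequality is strict, $\phi''<0$ still follows.
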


\begin{proof}

    First, we rewrite the condition $\theta<\frac{\pi}{2}$ as an inequality involving $a$ and $b$. The vectors from $(a,b)$ to $(0,0)$ and $(1,0)$ are $(-a,-b)$ and $(1-a,-b)$, so $\theta<\frac{\pi}{2}$ if and only if their dot product is positive. A simple computation shows this is equvalent to 
    \begin{equation}\label{keyineq}
  b^2 > a(1-a).
    \end{equation}

    Equation (\ref{second-derivative-bound}) shows that 
    \begin{displaymath}
        \phi''(x) \leq \frac{1}{(x-a)^2 + b^2} - \left( \frac{1}{x^2} + \frac{1}{(1-x)^2} \right).
    \end{displaymath}
    Therefore, it suffices to show that
    \begin{equation}\label{e4}
        \frac{1}{(x-a)^2 + b^2} < \frac{1}{x^2} + \frac{1}{(1-x)^2} 
    \end{equation}
for $0 < x < 1$.    Let $u = x^2$, $v = (1-x)^2$, and $S = u + v = x^2 + (1-x)^2 > 0$, so 
    \begin{equation}\label{e4.5}
        \frac{1}{x^2} + \frac{1}{(1-x)^2} = \frac{u + v}{uv} = \frac{S}{uv}.
    \end{equation}
 Hence, after multiplying by the positive quantity $uv \bigl( (x-a)^2 + b^2 \bigr)$,  Equation (\ref{e4}) is equivalent  to
    \begin{equation}\label{e5}
        \bigl( (x-a)^2 + b^2 \bigr) S > uv.
    \end{equation}
 Note that
    \begin{displaymath}
        (x-a)^2 + b^2 = \bigl( (x-a)^2 + a(1-a) \bigr) + \bigl( b^2 - a(1-a) \bigr),
    \end{displaymath}
    and
    \begin{displaymath}
    (x-a)^2 + a(1-a)  =(1-a)u + av. 
 \end{displaymath}
 Combine these two observations with Equation (\ref{e4.5}) to obtain
    \begin{displaymath}
        \begin{split}
            \bigl( (x-a)^2 + b^2 \bigr) S - uv
            & = \Bigl( \bigl( (1-a)u + av \bigr) (u+v) - uv \Bigr) + \bigl( b^2 - a(1-a) \bigr) S \\
            & = (1-a)u^2 + av^2 + \bigl( b^2 - a(1-a) \bigr) S > 0,
        \end{split}
    \end{displaymath}
    since we know that $0 < a < 1$, $b^2 - a(1-a) > 0$ by Equation (\ref{keyineq}), and $u$, $v$, and $S$ are all positive. Hence $\phi''(x) < 0$ for all $0 < x < 1$, so $\phi'$ is strictly decreasing and crosses the $x$-axis exactly once. Hence  Equation (\ref{e5}) is established, implying that $\varphi$ has exactly one critical point in $(0,1)$ as required.
\end{proof}

Combing all these observations implies the main theorem as follows. 

\begin{proof}  If the triangle is acute, apply Proposition \ref{allacute} to each side in turn to yield the result. If the triangle is obtuse,  use Proposition \ref{obtuseopposite} twice for the two sides which form the obtuse angle. For the side opposite this obtuse angle, use  Equation (\ref{e1}) to give a crude answer of three critical points on this side. As we saw in the analysis of the isosceles case, if there are three critical points on the interior of  a given side, it could happen that one is a local minimum and two are local maxima. In general, one also has to consider the various possibilities encompassing the possibility of  points of inflection. This simple analysis is left to the reader; in all cases there is either a unique maximum and the two minima at each vertex, or the case we have discussed.  If there are two critical points on the interior of our side, one must be a maximum, and it follows the other is a point of inflection. Adding in the minima at each vertex and combining all this  gives us either three or four local maxima and  local minima on the boundary of $\Delta$, as claimed.  \end{proof}

\end{document}